\newtheorem{proposition}{Proposition}[section]
\newtheorem{theorem}[proposition]{Theorem}
\newtheorem{lemma}[proposition]{Lemma}
\newtheorem{remark}[proposition]{Remark}
\DeclarePairedDelimiter\norm{\lVert}{\rVert}
\begin{document}
\title{\LARGE{Counting $k$-Hop Paths in the Random Connection Model}}

\author{Alexander~P.~Kartun-Giles,~\IEEEmembership{Member,~IEEE,}
        Sunwoo~Kim,~\IEEEmembership{Member,~IEEE}
\thanks{Alexander~P.~Kartun-Giles and Sunwoo Kim are with the Department of Electronic Engineering, Hanyang University, Seoul, South Korea.}}

\markboth{IEEE Transactions on Wireless Communications,~Vol.~xx, No.~xx, ~2017}%
{Jayaprakasam \MakeLowercase{\textit{et al.}}: Multi-objective Beampattern Optimization in Collaborative Beamforming via NSGA-II}

\maketitle
\begin{abstract}
We study, via combinatorial enumeration, the probability of $k$-hop connection between two nodes in a wireless multi-hop network. This addresses the difficulty of providing an exact formula for the scaling of hop counts with Euclidean distance without first making a sort of mean field approximation, which in this case assumes all nodes in the network have uncorrelated degrees. We therefore study the mean and variance of the number of $k$-hop paths between two vertices $x,y$ in the random connection model, which is a random geometric graph where nodes connect probabilistically rather than deterministically according to a critical connection range. In the example case where Rayleigh fading is modelled, the variance of the number of three hop paths is in fact composed of four separate decaying exponentials, one of which is the mean, which decays slowest as $\norm{x-y}\to\infty$. These terms each correspond to one of exactly four distinct sub-structures with can form when pairs of paths intersect in a specific way, for example at exactly one node. Using a sum of factorial moments, this relates to the path existence probability. We also discuss a potential application of our results in bounding the broadcast time.
\end{abstract}

\IEEEpeerreviewmaketitle

\section{Introduction}

We want to know what bounds we can put on the distribution of the graph distance, measured in hops, between two transceivers in a random geometric network, given we know their Euclidean separation. We might also ask, if we know the graph distance, what bounds can we put on the Euclidean distance? Routing tables already provide the shortest hop count to any other node, so this information alone is theoretically able to provide a quick, low power Euclidean distance estimate. These statistics can then be used to locate nodes via multilateration, by first placing anchors with known co-ordinates at central points in the network. They can also bound packet delivery delay, or bound the runtime of broadcast over an unknown topology \cite{mao2010,diaz2016}.

This question of low power localisation is important in the industrial application of wireless communications. But, in its own right, the question of relating graph to Euclidean distance is fundamental to the application of \textit{stochastic geometry} more generally, which is commonly used to calculate the macroscopic interference profile of e.g. an ultra-dense network, in terms of the microscopic details of the transmitter and receiver positions.

It is known that, in the connectivity regime of the random geometric graph, that as the system size goes to infinity, there exists a path of the shortest possible length between any pair of vertices with probability one. Relaxing the limit, and only requiring the graph to have a `giant' connected component, the graph distance is only ever a constant times the Euclidean distance (scaled by the connection range), i.e. that paths do not wander excessively from a theoretical straight line \cite{diaz2016}.

But the probability of $k$-hop connection, which is a natural question in mesh network theory, is not well understood. At least we can say, according to what has just been said, that for the smallest possible $k$, a path of that length will exist with probability one if the graph is connected with probability one. But since this regime requires expected vertex degrees to go to infinity, we naturally ask what can be said in sparser networks.

Routes into this problem are via mean field models. This abstracts the network as essentially non-spatial, wherein two nearby vertices have uncorrelated degree distributions. In fact, nearby vertices often share the same neighbours \cite{ta2007,mao2011}. Corrections to this can be made via \textit{combinatorial enumeration}, and we detail this novel idea in the rest of this article. We count the \textit{number} of paths of $k$-hops which join two distant vertices, and ask what proportion of the time this number is zero. Using the following relation between the factorial moments of  $\sigma_{k}(\norm{x-y})$, which is the number of $k$-hop paths between $x$ and $y$ at Euclidean distance $\norm{x-y}$ in some random geometric graph:
\begin{eqnarray}
 P(\sigma_{k}=t) = \frac{1}{t!}\sum_{i \geq 0} \frac{(-1)^{i}}{i!}\mathbb{E}\sigma_{k} (\sigma_{k}-1) \dots (\sigma_{k}-t-i+1),
\end{eqnarray}
we are able to deduce the probability of at least one $k$-hop path
\begin{eqnarray}
 P(\sigma_{k}>0) = 1 - \sum_{i \geq 0}\frac{(-1)^{i}}{i!}\mathbb{E}\left[\left(\sigma_{k}\right)_{i}\right]
\end{eqnarray}
where $\left(\sigma_{k}\right)_{i}$ is the descending factorial. The partial sums alternatively upper and lower bound this path-existence probability. So what are these factorial moments? In this article, we are able to deduce, in the case of the random connection model where Euclidean points are linked probabilistically according to some fading model, the first two. This requires closed forms for the mean and variance of $\sigma_{k}$ in terms of the connection function, and other system parameters such as node density. The remaining moments are then accessible via a recursion relation, which we intend to detail in a later work, but is currently out of reach.

This rest of this paper is structured as follows. In Section \ref{sec:rcm} we introduce the random connection model, and summarise our results concerning these moments. We then discuss the background to this problem, and related work in both engineering and applied mathematics. In two sections which follow, we then derive the mean and variance for the non-trivial case $k=3$, and the mean also for general $k \in \mathbb{N}$, showing how the variance is a sum of four terms. We also provide numerical corroboration, including of an approximation to the probability that there exist zero $k$-hop paths for each $k \in \mathbb{N}$ in terms of a sum of factorial moments. We finally conclude in Section \ref{sec:conclusion}.

\section{Summary of main results}\label{sec:rcm}
The \textit{Random Connection Model} is a graph $\mathcal{G}_{H}=(\mathcal{Y},E)$ formed on a random subset $\mathcal{Y}$ of $\mathbb{R}^{d}$ by adding an edge between distinct pairs of $\mathcal{Y}$ with probability $H(\norm{x-y})$, where $H: \mathbb{R}^{+} \to \left[0,1\right]$ is called the
\textit{connection function}, and $\norm{x-y}$ is Euclidean distance. Often $\mathcal{Y}$ is a Poisson point process of intensity $\rho \mathrm{d}x$, with $\mathrm{d}x$ Lesbegue measure on $\mathbb{R}^{d}$. By \textit{k-hop path} we mean a non-repeating sequence of $k$ adjacent edges joining two different vertices $x,y$ in the vertex set of $\mathcal{G}_{H}$. Since we only add edges between distinct pairs of $\mathcal{Y}$, vertices do not connect to themselves in what follows. This forbids paths of two hops becoming three hops simply by connecting vertices to themselves at some point along the path. See e.g. Fig. \ref{fig:allpaths}, which shows an example case for $k=3$. We also consider the practically important case of Rayleigh fading \cite{sklar1997} where, with $\beta > 0$ a parameter and $\eta > 0$ the path loss exponent, the connection function, with $\norm{x-y}>0$, is given by
\begin{eqnarray}\label{e:1}
H(\norm{x-y}) = \exp\left(-\beta\norm{x-y}^{\eta}\right)
\end{eqnarray}
and is otherwise zero. This choice is discussed in e.g. Section 2.3 of \cite{giles2016}. Note that we refer to \textit{nodes} when discussing actual communication devices in a wireless network, and \textit{vertices} when discussing their associated graphs directly.

Using the usual definition, a path, which consists of a sequence of \textit{hops}, is a \textit{trail} in which all vertices are distinct. Note that a trail is a walk in which all edges a distinct, and a walk is an alternating series of vertices and edges. So in these results we have no loops or self-loops, and all the paths are distinguishable from each other even if only on a single edge.

We now detail our main results.

\begin{theorem}[Expected number of $k$-hop paths for general $H$]\label{p:khopexpectationgeneral}
  Take a general connection function $H: \mathbb{R}^{+} \to [0,1]$. Define a new Poisson point process $\mathcal{Y}^{\star}$ which is $\mathcal{Y}$ conditioned on containing two specific points $x,y \in \mathbb{R}^{d}$ at Euclidean distance $\norm{x-y}$. Consider those two vertices $x,y$ in the vertex set of the random geometric graph $\mathcal{G}_{H}=(\mathcal{Y}^{\star},E)$, and set $x=z_0,y=z_k$. Then, in $\mathcal{G}_{H}$, the expected number of $k$-hop paths starting at $x$ and terminating at $y$ is
\begin{eqnarray}\label{e:khopexpectationgeneral}
\mathbb{E}\sigma_{k} = \rho^{k-1}\int_{\mathcal{V}^{dk-d}}\mathrm{d}z_{1} \dots\mathrm{d}z_{k-1}\prod_{i=0}^{k-1}H\left(\norm{z_{i}-z_{i+1}}\right).
\end{eqnarray}
\end{theorem}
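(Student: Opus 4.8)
The plan is to compute $\mathbb{E}\sigma_{k}$ by writing $\sigma_{k}$ as a sum of indicator variables over candidate paths and then applying the multivariate Mecke (Campbell) formula for Poisson processes. First I would invoke Slivnyak's theorem to dispense with the conditioning: the Palm distribution of a Poisson process at the two points $x,y$ coincides in law with the original process $\mathcal{Y}$ with $x,y$ adjoined, so that $\mathcal{Y}^{\star}$ is distributed as $\mathcal{Y}\cup\{x,y\}$ and, almost surely, $x,y\notin\mathcal{Y}$. Consequently any $k$-hop path from $x=z_{0}$ to $y=z_{k}$ draws its $k-1$ interior vertices from $\mathcal{Y}$, and these are automatically distinct from the labelled endpoints.

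Next, I would express the count as
\begin{eqnarray}
\sigma_{k} = \sum_{(z_{1},\dots,z_{k-1})}^{\neq} \mathbf{1}\left[\text{every edge } (z_{i},z_{i+1}),\ i=0,\dots,k-1,\ \text{is present}\right],
\end{eqnarray}
where $\sum^{\neq}$ runs over ordered tuples of distinct points of $\mathcal{Y}$. Since the endpoints are labelled and a path traverses its interior vertices in a definite order, each $k$-hop path corresponds to exactly one such ordered tuple; this is what fixes the combinatorial prefactor and explains why no symmetry factor appears. Taking expectations, I would first average over the edge randomness conditional on the vertex positions. Because in the random connection model edges are inserted independently across pairs and independently of $\mathcal{Y}$, the conditional probability that all $k$ edges of a given candidate path are present factorises as $\prod_{i=0}^{k-1}H(\norm{z_{i}-z_{i+1}})$.

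It then remains to average over the point process. Applying the $(k-1)$-th order Mecke formula to the expected sum over distinct tuples converts it into an integral over $k-1$ copies of $\mathbb{R}^{d}$ with prefactor $\rho^{k-1}$, yielding exactly \eqref{e:khopexpectationgeneral}. All integrands are nonnegative, so interchanging the two expectations and applying Tonelli's theorem are unconditionally valid, and the identity holds in $[0,\infty]$ even when the integral diverges; for the Rayleigh connection function \eqref{e:1} it is finite.

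The step I expect to require the most care is the combinatorial bookkeeping together with the correct invocation of Mecke's formula: one must verify that the non-repetition constraint defining a path is exactly the distinctness constraint enforced by $\sum^{\neq}$, that including the edges to the two fixed endpoints introduces no extra density factor (the endpoints are deterministic, not integrated), and that each path is counted once rather than twice. Once these points are pinned down, the factorisation of the edge expectation and the Mecke identity deliver the result directly.
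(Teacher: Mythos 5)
Your proposal is correct and takes essentially the same route as the paper: you express $\sigma_{k}$ as a sum over ordered $(k-1)$-tuples of distinct points of a product of edge indicators, factorise its conditional expectation into $\prod_{i=0}^{k-1}H\left(\norm{z_{i}-z_{i+1}}\right)$ by edge independence, and apply the $(k-1)$-th order Slivnyak--Mecke formula to get the $\rho^{k-1}$-weighted integral, exactly as in the paper's proof via the path-existence function $g$. Your additional remarks (reducing $\mathcal{Y}^{\star}$ to $\mathcal{Y}\cup\{x,y\}$ via Slivnyak, the once-per-path ordered-tuple bookkeeping, and Tonelli) merely make explicit points the paper leaves implicit.
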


\begin{proposition}[Expected number of $k$-hop paths for the specific case of Rayleigh fading]\label{p:kexpectation}
  Take $H$ from Eq. \ref{e:1}, and $d,\eta=2$ so that we consider free space propagation in two dimensions. Define a new Poisson point process $\mathcal{Y}^{\star}$ as in Theorem \ref{p:khopexpectationgeneral}, and the random geometric graph $\mathcal{G}_{H}=(\mathcal{Y}^{\star},E)$. Then, in $\mathcal{G}_{H}$, the expected number of $k$-hop paths starting at $x$ and terminating at $y$ is
\begin{eqnarray}\label{e:khopexpectation}
\mathbb{E}\sigma_{k} = \frac{1}{k}\left(\frac{\rho \pi}{\beta}\right)^{k-1}\exp\left(\frac{-\beta \norm{x-y}^{2}}{k}\right).
\end{eqnarray}
\end{proposition}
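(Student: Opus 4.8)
The plan is to start from the general formula in Theorem~\ref{p:khopexpectationgeneral} and evaluate the integral explicitly for the Rayleigh connection function with $d=\eta=2$. Substituting $H(\norm{z_i-z_{i+1}}) = \exp(-\beta\norm{z_i-z_{i+1}}^2)$ into Eq.~\ref{e:khopexpectationgeneral}, the product of connection functions becomes $\exp\bigl(-\beta\sum_{i=0}^{k-1}\norm{z_i-z_{i+1}}^2\bigr)$, so that
$$\mathbb{E}\sigma_k = \rho^{k-1}\int_{\mathbb{R}^{2(k-1)}}\exp\Bigl(-\beta\sum_{i=0}^{k-1}\norm{z_i-z_{i+1}}^2\Bigr)\,\mathrm{d}z_1\cdots\mathrm{d}z_{k-1},$$
with $z_0=x$ and $z_k=y$ held fixed. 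The task reduces to evaluating a multidimensional Gaussian integral over the $k-1$ intermediate points.

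Let me sketch how I would handle the integral. The exponent is a quadratic form in the intermediate variables $z_1,\dots,z_{k-1}$, and because $d=2$ the two Cartesian coordinates decouple, so I would first reduce to a product of two identical one-dimensional problems. In each coordinate the exponent is $-\beta$ times a sum of squared consecutive differences along a chain from the fixed endpoint $x$-coordinate to the fixed $y$-coordinate; this is a standard tridiagonal (path-graph Laplacian) quadratic form. I would integrate out the intermediate variables sequentially, completing the square one variable at a time. At each step integrating a Gaussian in one variable produces a $\sqrt{\pi/(\beta\,c_j)}$ prefactor together with a residual quadratic coupling the remaining variables; tracking the coefficients $c_j$ gives a telescoping recursion that I expect collapses to the clean factor $1/k$ and reproduces the Gaussian decay with effective rate $\beta/k$ in the squared endpoint separation.

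Concretely, I anticipate the recursion yields, after integrating all $k-1$ points in one coordinate, a factor of the form $\bigl(\pi/\beta\bigr)^{(k-1)/2}\,k^{-1/2}\exp\bigl(-\tfrac{\beta}{k}(x_1-y_1)^2\bigr)$, where $x_1,y_1$ are the first coordinates of $x,y$. Squaring to account for the second coordinate (which contributes the identical structure in $x_2,y_2$) and multiplying by $\rho^{k-1}$ gives $\tfrac{1}{k}(\rho\pi/\beta)^{k-1}\exp\bigl(-\tfrac{\beta}{k}\norm{x-y}^2\bigr)$, which is exactly Eq.~\ref{e:khopexpectation}. An alternative and perhaps cleaner route is to recognise the sum of squared consecutive differences as the discrete heat-kernel / Brownian-bridge structure: convolving $k$ Gaussian kernels of variance $\propto 1/\beta$ yields a single Gaussian of variance $\propto k/\beta$ evaluated at the endpoint separation, and the $1/k$ prefactor emerges from the normalisation of the convolution. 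I would verify one small case (say $k=2$ or $k=3$) directly to fix the constants.

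The main obstacle I expect is bookkeeping the constants through the iterated integration — in particular confirming that the telescoping product of the per-step Gaussian normalisation factors collapses to exactly $1/k$ rather than some other function of $k$, and that the effective decay rate is precisely $\beta/k$. This is where an induction on $k$ (or the convolution-of-Gaussians identity) is cleanest: assuming the formula holds after integrating the first $k-2$ points, one integrates the final intermediate point and checks that the rate $\beta/(k-1)$ updates to $\beta/k$ and the prefactor picks up the correct factor. The geometric content is transparent, so the only real work is disciplined tracking of the Gaussian normalisations.
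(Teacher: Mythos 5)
Your proposal is correct and follows essentially the same route as the paper: substitute the Rayleigh connection function into the general formula of Theorem~\ref{p:khopexpectationgeneral} and evaluate the resulting multidimensional Gaussian integral by sequentially completing the square, with the paper carrying out $k=2$ explicitly and then inferring the general form of Eq.~\ref{e:khopexpectation} by comparing the cases computed one by one. If anything your version is more complete, since the telescoping recursion you describe --- equivalently the Gaussian convolution semigroup, under which the per-hop rate updates as $r_{j}=r_{j-1}\beta/(r_{j-1}+\beta)$, giving $r_{j}=\beta/(j+1)$ while the $d=2$ normalisation factors $\pi j/\bigl(\beta(j+1)\bigr)$ multiply to exactly $(\pi/\beta)^{k-1}/k$ --- supplies the induction on $k$ that the paper leaves implicit.
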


\begin{theorem}[Variance of the number of three-hop paths for the specific case of Rayleigh fading]\label{p:threehopvariance}
  Take $H$ from Eq. \ref{e:1}, and $d,\eta=2$ so that we consider free space propagation in two dimensions. Define a new Poisson point process $\mathcal{Y}^{\star}$ as in Theorem \ref{p:khopexpectationgeneral}, and the random geometric graph $\mathcal{G}_{H}=(\mathcal{Y}^{\star},E)$. Then, in this graph, the variance of the number of $k$-hop paths starting at $x$ and terminating at $y$ is
\begin{multline}\label{e:threevar}
  \mathrm{Var}\left(\sigma_{3}\right) = \mathbb{E}\sigma_{3} + \frac{\pi^3\rho^3}{\beta^3}\left(\frac{1}{4}\exp{\left( \frac{-\beta\norm{x-y}^2}{2}\right)}\right.\\\left. +\frac{1}{6}\exp{\left( \frac{-3\beta\norm{x-y}^2}{4}\right)}\right)
+\frac{\pi^2\rho^2}{8\beta^2}\exp{\left( -\beta\norm{x-y}^2\right)}.
\end{multline}
\end{theorem}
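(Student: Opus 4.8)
The plan is to obtain the variance from the second moment via $\mathrm{Var}(\sigma_{3})=\mathbb{E}[\sigma_{3}^{2}]-(\mathbb{E}\sigma_{3})^{2}$, computing $\mathbb{E}[\sigma_{3}^{2}]$ by a point-process expansion. First I would write the count as a sum of edge-indicators over ordered pairs of distinct intermediate vertices,
\[
\sigma_{3}=\sum_{(z_{1},z_{2})}^{\neq}\mathbf{1}[x\sim z_{1}]\,\mathbf{1}[z_{1}\sim z_{2}]\,\mathbf{1}[z_{2}\sim y],
\]
where $\sim$ denotes adjacency in $\mathcal{G}_{H}$ and the sum ranges over ordered pairs of distinct points of $\mathcal{Y}^{\star}\setminus\{x,y\}$ (so the reversed path $x$–$z_{2}$–$z_{1}$–$y$ is counted separately). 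Squaring gives a double sum over two index pairs $(z_{1},z_{2})$ and $(w_{1},w_{2})$, and the key step is to apply the multivariate Mecke equation to this double sum, grouping terms by how $\{z_{1},z_{2}\}$ and $\{w_{1},w_{2}\}$ overlap as subsets of the process. Because edges are conditionally independent given the vertex positions, each grouped term factorises into a product of connection functions, one per \emph{distinct} edge of the union of the two paths (a shared edge contributes only once, since $\mathbf{1}[\cdot]^{2}=\mathbf{1}[\cdot]$), with a factor $\rho$ for each free intermediate vertex.

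Five overlap patterns then arise. When $\{z_{1},z_{2}\}\cap\{w_{1},w_{2}\}=\emptyset$ the paths share only the endpoints $x,y$; all six edges are distinct, the integrand factorises completely, and since the fourth factorial moment measure of a Poisson process is the product measure $\rho^{4}\,\mathrm{d}z_{1}\cdots\mathrm{d}w_{2}$ (the excluded diagonal being null), this $\rho^{4}$ contribution equals $(\mathbb{E}\sigma_{3})^{2}$ exactly and cancels in the variance. The diagonal $(z_{1},z_{2})=(w_{1},w_{2})$ returns $\mathbb{E}\sigma_{3}$, the first surviving term. The remaining overlaps are precisely the intersection sub-structures advertised: a single shared vertex in \emph{aligned} position ($z_{1}=w_{1}$ or $z_{2}=w_{2}$), which forces the first or last hop to be a common edge (five distinct edges, three free vertices, order $\rho^{3}$); a single shared vertex in \emph{crossed} position ($z_{1}=w_{2}$ or $z_{2}=w_{1}$), giving six distinct edges meeting at one vertex (still order $\rho^{3}$); and the reversed path $(z_{1},z_{2})=(w_{2},w_{1})$, which shares both vertices and the middle edge (five distinct edges, two free vertices, order $\rho^{2}$). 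In each of the first two cases the two $x\leftrightarrow y$–symmetric sub-cases contribute equally, supplying the factors $\tfrac{1}{4}=2\cdot\tfrac{1}{8}$ and $\tfrac{1}{6}=2\cdot\tfrac{1}{12}$ seen in the statement.

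With $H(\norm{a-b})=\exp(-\beta\norm{a-b}^{2})$ and $d=2$, every surviving integrand is Gaussian, and each $\mathbb{R}^{2}$ integration splits into two identical one-dimensional Gaussian integrals that I would evaluate by completing the square. The efficient route is to integrate out the degree-two ``arm'' vertices first: a factor $\exp(-\beta(\norm{u-a}^{2}+\norm{a-y}^{2}))$ integrates over $a\in\mathbb{R}^{2}$ to $\tfrac{\pi}{2\beta}\exp(-\tfrac{\beta}{2}\norm{u-y}^{2})$, after which only an effective Gaussian in the shared vertex $u$ remains and its quadratic coefficient fixes the decay rate. This yields $\tfrac{\pi^{3}}{8\beta^{3}}\exp(-\tfrac{\beta}{2}\norm{x-y}^{2})$ per aligned sub-case and $\tfrac{\pi^{3}}{12\beta^{3}}\exp(-\tfrac{3\beta}{4}\norm{x-y}^{2})$ per crossed sub-case; the reversed structure, in which both free vertices couple to $x$, to $y$ and to each other, reduces to a two-variable Gaussian with matrix $\left(\begin{smallmatrix}3&-1\\-1&3\end{smallmatrix}\right)$ per coordinate and gives $\tfrac{\pi^{2}}{8\beta^{2}}\exp(-\beta\norm{x-y}^{2})$. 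Multiplying by $\rho^{3},\rho^{3},\rho^{2}$ and the symmetry multiplicities and adding $\mathbb{E}\sigma_{3}$ assembles Eq.~\eqref{e:threevar}.

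The main obstacle is not the analysis but the bookkeeping: correctly enumerating the overlap patterns, and in particular tracking \emph{which} edges are shared in each pattern, since a shared edge must not be double-counted in the product of connection functions. It is exactly this that separates the aligned sub-structure (shared terminal edge, decay $\exp(-\tfrac{\beta}{2}\norm{x-y}^{2})$) from the crossed one (no shared edge, decay $\exp(-\tfrac{3\beta}{4}\norm{x-y}^{2})$) despite both having three free vertices. A secondary care point is confirming that the vertex-disjoint term reproduces $(\mathbb{E}\sigma_{3})^{2}$ precisely, so that the variance carries no residual $\rho^{4}$ contribution; this follows from the product structure of the Poisson factorial moment measures but should be verified explicitly. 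Finally, I would note that the mean term decays as $\exp(-\tfrac{\beta}{3}\norm{x-y}^{2})$ and hence dominates all three correction terms as $\norm{x-y}\to\infty$, consistent with the stated interpretation.
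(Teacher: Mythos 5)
Your proposal is correct and takes essentially the same route as the paper: both expand $\mathbb{E}\left[\sigma_{3}^{2}\right]$ via the Mecke formula over ordered pairs of paths classified by shared intermediate vertices, cancel the vertex-disjoint contribution against $\left(\mathbb{E}\sigma_{3}\right)^{2}$, and evaluate the same four surviving Gaussian structures (the self-pair giving $\mathbb{E}\sigma_{3}$, the paper's $\Sigma_{1(1)}$ = your aligned case, $\Sigma_{1(2)}$ = your crossed case, and $\Sigma_{2(2)}$ = your reversed case), with every coefficient matching Eq.~\eqref{e:threevar}. Your explicit symmetry factors of $2$ for the aligned and crossed sub-cases even make precise a multiplicity the paper leaves implicit when passing from its single-integral expression for $\Sigma_{1(2)}$ to the stated $\tfrac{1}{6}$ coefficient.
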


\begin{theorem}[Path existence probability for the specific case of Rayleigh fading]\label{t:moments}
 In $\mathcal{G}_{H}$, as discussed, the probability that a three-hop path exists between nodes at Euclidean separation $\norm{x-y}$ satisfies
\begin{eqnarray}
  P(\sigma_{3}>0) \geq 1 - \left[ 2\mathbb{E}\sigma_{3} - (\mathbb{E}\sigma_{3})^2 - \mathrm{Var}(\sigma_{3})  \right]
\end{eqnarray}
and these statistics are known in closed form according to our results above in terms of the point process density and model parameters, given the nodes are at a specific Euclidean separation.
\end{theorem}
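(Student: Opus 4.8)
The plan is to enter through the complementary identity $P(\sigma_{3}>0)=1-P(\sigma_{3}=0)$, which turns the assertion into the equivalent one-sided estimate on the probability that \emph{no} three-hop path is present,
\[
P(\sigma_{3}=0)\le 2\mathbb{E}\sigma_{3}-(\mathbb{E}\sigma_{3})^{2}-\mathrm{Var}(\sigma_{3}).
\]
The first move is to collapse the right-hand side with the elementary identity $(\mathbb{E}\sigma_{3})^{2}+\mathrm{Var}(\sigma_{3})=\mathbb{E}[\sigma_{3}^{2}]$, so that the target reads $P(\sigma_{3}=0)\le 2\mathbb{E}\sigma_{3}-\mathbb{E}[\sigma_{3}^{2}]$. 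This is the form I would manipulate, since both the first moment and the raw second moment are already supplied by the earlier results: $\mathbb{E}\sigma_{3}$ in closed form by Proposition \ref{p:kexpectation} and $\mathbb{E}[\sigma_{3}^{2}]=\mathrm{Var}(\sigma_{3})+(\mathbb{E}\sigma_{3})^{2}$ by Theorem \ref{p:threehopvariance}.

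The mechanism for producing exactly these coefficients is the factorial-moment (inclusion–exclusion) representation of $P(\sigma_{3}=0)$ recorded in the introduction, read through its atomic expansion. Writing $\sigma_{3}=\sum_{\gamma}\mathbb{1}_{A_{\gamma}}$ as a sum of indicators over the candidate three-hop path shapes $\gamma$, one has $\mathbb{E}\sigma_{3}=\sum_{\gamma}P(A_{\gamma})$ and $\mathbb{E}[\sigma_{3}^{2}]-\mathbb{E}\sigma_{3}=\mathbb{E}[(\sigma_{3})_{2}]=2\sum_{\gamma<\gamma'}P(A_{\gamma}\cap A_{\gamma'})$. Expanding the target combination as $2\mathbb{E}\sigma_{3}-\mathbb{E}[\sigma_{3}^{2}]=\sum_{t\ge 0}(2t-t^{2})P(\sigma_{3}=t)$ and comparing this termwise against the single atom $P(\sigma_{3}=0)$ is the natural route to the stated linear and quadratic coefficients: the quadratic weight $2t-t^{2}$ is precisely what separates this combination from the ordinary second Bonferroni truncation of the partial sums.

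Once the estimate on $P(\sigma_{3}=0)$ is secured, the remainder is purely substitutional. I would insert $\mathbb{E}\sigma_{3}=\tfrac{1}{3}(\rho\pi/\beta)^{2}\exp(-\beta\norm{x-y}^{2}/3)$ from Proposition \ref{p:kexpectation} at $k=3$, square it to form $(\mathbb{E}\sigma_{3})^{2}$ (contributing a single $\exp(-2\beta\norm{x-y}^{2}/3)$ term), and add the expression for $\mathrm{Var}(\sigma_{3})$ from Theorem \ref{p:threehopvariance}. This renders a fully explicit lower bound on $P(\sigma_{3}>0)$ in terms of $\rho$, $\beta$ and $\norm{x-y}$ alone, which is the content of the claim.

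The main obstacle is the second step, namely pinning down the sign of the truncation at the required order. The weights $2t-t^{2}=t(2-t)$ are nonnegative only for $t\in\{0,1,2\}$ and strictly negative for $t\ge 3$, so the termwise comparison $\mathbb{1}(t=0)\le 2t-t^{2}$ is delicate at the boundary atoms: the $\{\sigma_{3}=0\}$ contribution competes directly against the $\{\sigma_{3}=1\}$ contribution, and any favourable sign must be recovered only after summation rather than pointwise. Establishing that the aggregate residual of the factorial-moment series carries the sign needed to reach $2\mathbb{E}\sigma_{3}-(\mathbb{E}\sigma_{3})^{2}-\mathrm{Var}(\sigma_{3})$ — as opposed to the combination produced by the plain second Bonferroni bound — is where the real work lies, and is the step I would scrutinise most carefully, in particular in the sparse, large-$\norm{x-y}$ regime that is of primary interest here, before committing to the closed-form substitution above.
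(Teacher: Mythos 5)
Your reduction of the printed statement to $P(\sigma_{3}=0)\le 2\mathbb{E}\sigma_{3}-\mathbb{E}[\sigma_{3}^{2}]$ is faithful, and the obstacle you flag at the end is not a delicacy to be overcome by more careful summation: it is fatal, and you were one step from saying so. The weight $2t-t^{2}$ vanishes at $t=0$ while the atom $\mathbf{1}\{t=0\}$ contributes $1$, so the claimed inequality amounts to $P(\sigma_{3}=0)\le P(\sigma_{3}=1)-\sum_{t\ge 3}t(t-2)P(\sigma_{3}=t)$, and this collapses exactly in the sparse, large-$\norm{x-y}$ regime you single out for scrutiny: by Proposition \ref{p:kexpectation} and Theorem \ref{p:threehopvariance}, both $\mathbb{E}\sigma_{3}$ and $\mathrm{Var}(\sigma_{3})$ decay to zero as $\norm{x-y}\to\infty$, so the theorem's right-hand side tends to $1$ while $P(\sigma_{3}>0)\to 0$. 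No aggregate-sign argument can rescue the inequality as displayed; it is mis-stated, and your final paragraph essentially detects this without drawing the conclusion.

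For comparison, the paper's own ``proof'' is a one-line appeal to the fact, recorded in its introduction, that the partial sums of the factorial-moment series $P(\sigma_{3}=0)=\sum_{i\ge 0}\frac{(-1)^{i}}{i!}\mathbb{E}[(\sigma_{3})_{i}]$ alternately upper- and lower-bound $P(\sigma_{3}=0)$. The even truncation at $i=2$ gives $P(\sigma_{3}=0)\le 1-\mathbb{E}\sigma_{3}+\tfrac{1}{2}\mathbb{E}[(\sigma_{3})_{2}]$, i.e. $P(\sigma_{3}>0)\ge \tfrac{3}{2}\mathbb{E}\sigma_{3}-\tfrac{1}{2}(\mathbb{E}\sigma_{3})^{2}-\tfrac{1}{2}\mathrm{Var}(\sigma_{3})$, and crucially this has a purely pointwise proof, since $\mathbf{1}\{t=0\}\le 1-t+\binom{t}{2}$ follows from $(t-1)(t-2)/2\ge 0$ for every integer $t\ge 0$; none of the after-summation cancellation you were worried about is required, which is itself a signal that the displayed combination is a transcription error. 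Indeed, removing the spurious ``$1-[\,\cdot\,]$'' wrapper, the bracketed quantity $2\mathbb{E}\sigma_{3}-(\mathbb{E}\sigma_{3})^{2}-\mathrm{Var}(\sigma_{3})=\mathbb{E}\sigma_{3}-\mathbb{E}[(\sigma_{3})_{2}]$ is a valid, if weaker, lower bound on $P(\sigma_{3}>0)$, again pointwise, because $\mathbf{1}\{t>0\}\ge 2t-t^{2}$ for all integers $t\ge 0$. So the theorem should read either $P(\sigma_{3}>0)\ge 2\mathbb{E}\sigma_{3}-(\mathbb{E}\sigma_{3})^{2}-\mathrm{Var}(\sigma_{3})$ or, sharper, the genuine second Bonferroni bound above; with either corrected target, your closing substitution step (Proposition \ref{p:kexpectation} at $k=3$ together with Theorem \ref{p:threehopvariance}) goes through unchanged, but as written your proposal cannot be completed because the literal statement admits the counterexample just described.
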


\begin{figure}
  \noindent \begin{centering}
    \includegraphics[scale=0.62]{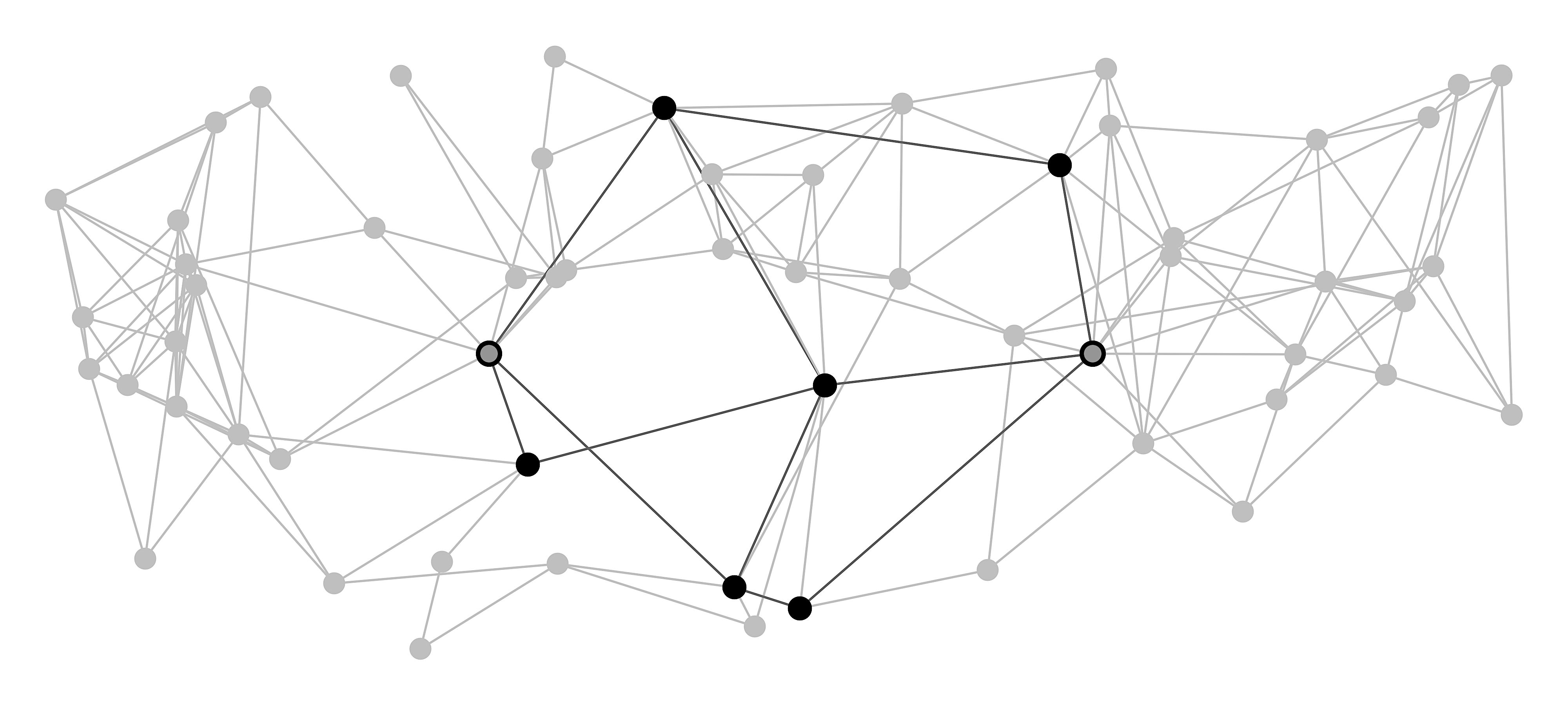}
\par\end{centering}
\caption{Example of the random connection model bounded inside a rectangle. All three hop paths between the two nodes with thick borders are highlighted. The Euclidean separation between $x$ and $y$ is 3 units taking $\beta=1$ and an expected $\rho=1$ node per unit area, $\sigma_{3}=5$, and $\mathbb{E}\sigma_{3}=2.36$ and $\mathrm{Var}(\sigma_{3})=9.95$, according to Eqs. \ref{e:khopexpectation} and \ref{e:threevar}.}
\label{fig:allpaths}
\end{figure}

\section{Background and Related Work}\label{sec:introduction}
We detail the historical background, and review what is known about this problem up to now. We also review the non-traditional random connection model, and highlight the applications of this theory in the developing field of mesh networks.

\subsection{Historical Background in Wireless Communications}
This problem has been around since the letter by S. A. G. Chandler in 1989 \cite{chandler1989}. Approximations are given to the probability that a path of $k$ hops will exist between $x$ and $y$ at $\norm{x-y}$ in a random geometric graph on a homogeneous Poisson point process. Similar attempts, concerning the slightly different problem of deducing the probability that two randomly selected stations are able to connect in one or two hops, with numerical corroboration, are presented in Bettstetter and Eberspacher  \cite{bettstetter2003}. More generally, given deterministic connection radius $r_{0}$, mean-field models are presented in Ta, Mao and Anderson \cite{ta2007}. They find a recursive formula for the probability of connection in $k$-hops or fewer. The contribution is of practical interest as an upper bound, though becomes inaccurate for large $k$ as the assumptions become unrealistic. It also weakens as the point process intensity gets large.

Later, Mao, Zhang and Anderson consider the problem in the case of probabilistic connection \cite{mao2010}. Again, an upper bound is presented, which is accurate for $k<3$. Again, for dense networks and/or large $k$, the bound is inaccurate. The concern, raised by the authors, is that a mean field approximation, which simply ignores the increased likelihood of a nodes sharing neighbours when they are close, does not yield much of an advance on this problem. It is beyond the scope of this article to quantify the effects this would bring about on network performance.

\subsection{Similar Advances in Applied Probability Theory}

From a pure mathematical viewpoint, most of the work related to this problem has been focused on studying upper bounds on the graph distance in terms of the Euclidean distance. As a key early result, Ellis, Martin and Yan showed that there exists some large constant $K$ such that for every $r_{0} \geq r_{c}$, and writing $d_{\text{Graph}}(x,y)$ for graph distance and $d_{\text{Euclidean}}(x,y)$ for Euclidean distance, for every pair of vertices
\begin{eqnarray}\label{e:linearbound}
  d_{Graph}(x,y) \leq K d_{Euclidean}(x,y) / r_{0}
\end{eqnarray}
This was extended by Bradonjic et al. for the supercritical percolation range of $r_{0}$, given $d_{Euclidean}(x,y)  = \Omega(\log^{7/2} n / r_{0}^2)$, i.e. given the Euclidean distance is sufficently large \cite{bradonjic2010}. Friedrich, Sauerwald and Stauffer improved this to $d_{Euclidean}(x,y)  = \Omega(\log n/r_{0})$. They also proved that if $r_{0}(n) = o(r_{c})$, with $r_{c}$ the critical radius for asymptotic connectivity with probability one, asymptotically almost surely there exist pairs of vertices with
$d_{Euclidean}(x,y) \leq 3r_{0}(n)$ and $d_{Graph}(x,y) = \Omega(\log n / r^{2})$, i.e. that the linear bound of Eq. \ref{e:linearbound} does not hold in the subconnectivity regime.

Most recently, the article of D\'{i}az, Mitsche, Perarnauand and P\'{e}rez-Gim\'{e}nez presents a rigorous proof of the fact that, in the connectivity regime, a path of the shortest possible length given the finite communication radius $r_{0}$ \cite{diaz2016} exists between a distant pair with probability one. This is equivalent to $K = 1 + o(1)$ asymptotically almost surely in Eq. \ref{e:linearbound}, given $r_{0} = \omega(r_{c})$.

\subsection{The Random Connection Model}
In the random subgraph of the complete graph on $n$ nodes obtained by including each of its edges independently with probability $p \sim \log n/n$, the probability that this graph is disconnected but free of isolated nodes tends to zero \cite{bollobas2001,penrose2013}. The \textit{random connection model} considers a random subgraph of the complete graph this time on a collection of nodes in a $d$-dimensional metric space \cite{iyer2015,mao2010,mao2011,mao2013}. The edge are added independently, but with probability $H: \mathbb{R} \to \left[0,1\right]$, where in the infinite space case $\int_{\mathbb{R}^{d}}H(x)\mathrm{d}x<\infty$ so that the expected vertex degrees are infinite only when the density is infinite. When the space is bounded, the graph is known as \textit{soft random geometric graph} \cite{penrose2016,cef2012,giles2016,penrosebook}.

In the language of theoretical probability theory, the connectivity threshold  \cite{penrose1997,mao2011,mao2013,iyer2015} goes as follows. Take a Poisson
point process $\mathcal{Y} \subset [0,1]^{d}$ of intensity $\lambda(n)\mathrm{d}x$, $\mathrm{d}x$ Lesbegue measure on $\mathbb{R}^{d} $, and $\left(\lambda\left(n\right)\right)_{n\in\mathbb{N}}$ an increasing $(0,\infty)$-valued sequence which goes to $\infty$ with $n$. Take the measurable function $H: \mathbb{R}^{+} \to [0,1]$ to be the probability that two nodes are joined by an edge. Then, as $\lambda\left(n\right) \to \infty$ along this sequence, in any limit where the expected number of isolated nodes converges to a positive constant $\alpha < \infty$, i.e.
\begin{equation}\label{e:degree}
\lambda \int_{\left[0,\sqrt{n}\right]^{2}}\exp\left(-\lambda\int_{\left[0,\sqrt{n}\right]^{2}} H\left(\norm{x-y}\right)\mathrm{d}y\right)\mathrm{d}x \to \alpha,
\end{equation}
their number converges to a Poisson distribution with mean $\alpha$, see Theorem 3.1 in Penrose's recent paper \cite{penrose2016}. The connection probability then follows, as before, from the probability that the graph is free of isolated vertices, given some conditions on the rate of growth of $H$ with $n$, see e.g. \cite{penrose2016,cef2012} for the case of random connection in a confined geometry, or non-convex geometry \cite{giles2016}, or for the random connection model \cite{mao2011}. Put simply, with any dense limit $\lambda(n) \to \infty$, the vertex degrees diverge, making isolation rare, and so vertices have degree zero  independently. They are therefore a homogeneous Poisson point process in space. Once this has occurred, all that is required is the expected number of isolated vertices $\alpha \to 0$. This is sufficient for connectivity, because large clusters merge, which is the main part of the proof.

The recent consensus is that spatial dependence between the node degrees, which form a Markov random field \cite{clifford1990} in the deterministic, finite range case, appears to preclude exact description of a map between distances, given by some norm, and the space of distributions describing the probability of $k$-hop connection between two nodes of known displacement, see e.g. Section 1 of \cite{ta2007}. We believe, however, that the number $\sigma_{k}(\norm{x-y})$ of $k$-hop paths may have a probability generating function similar to a $q$-series common in other combinatorial enumeration problems \cite{goulden2004}. In fact, the first author has demonstrated that this is indeed the case under deterministic connection in one dimension, proving that $\mathbb{E}q^{\sigma_{k}}$ is a random $q$-multinomial coefficient \cite{giles2017}. This is also studied in a single dimension recently in vehicular networks \cite{knight2017}.

\subsection{Impacts in Wireless Communications}
Bounds on the distribution of the number of hops between two points in space, for example, has been a recent focus of many researchers interested in the statistics of the number of hops to e.g. a sink in a wireless sensor network, or gateway-enabled small cell in an ultra-dense deployment of non-enabled smaller cells \cite{dulman2006,mao2011,ta2007,ge2016}, since it relates to data capacity in e.g. multihop communication with infrastructure support \cite{ng2010,zemlianov2005}, route discovery \cite{bettstetter2003,perevalov2005} and localisation \cite{nguyen2015}. We now detail other important examples of ongoing research where new insight on this problem will prove useful.

Firstly, consider the problem of broadcast. Broadcasting information from one node to eventually all other nodes in a network is a classic problem at the interface of applied mathematics and wireless communications. The task is to take a message available at one node, and by passing it away from that node, and then from its neighbours, make the message available to all the nodes in either the least time, or using the least energy, or using the fewest number of transmissions. If the nodes form the state space of a Markov chain, with links weighted with transmission probabilities, rapid mixing of this chain implies fast broadcast \cite{elsasser2007}.

When the network considered is random and embedded in space, the problem is this: given a graph $\mathcal{G}_{H}$, a source node is selected. This source node has a message to be delivered to all the other nodes in the network. Nodes not within range of the source must receive the message indirectly, via multiple hops. The model most often considered in the literature on broadcasting algorithms is synchronous. All nodes have clocks whose ticks measure time steps, or \textit{rounds}. A broadcasting operation is a sequence $(T)_{i \leq T}$, where each element is a set of nodes that act as transmitters in round $i$. The execution time is the number of rounds required before all nodes hear the message, which is the length $T$ of the broadcast sequence. See the review of Peleg for the case where collision avoidance is also considered \cite{peleg2007}.

Let $T_{p}$ be the number of rounds required for the message to be broadcast to all nodes with probability $p$. Then, since every broadcasting algorithm requires at least $\max \{ \log_{2} n, \text{diam}\left(\mathcal{G}\right) \}$, see \cite[Section 2]{elsasser2007}, the algorithm is called \textit{asymptotically optimal} if $T_{1-1/N} = \mathcal{O}\left(\log n + \text{diam}\left(\mathcal{G}\right)\right)$. This relates the broadcast time on \textit{random} graph, measured in rounds, to its diameter, i.e. the length in hops of longest path between any pair of nodes, given the path is geodesic. The running times are in fact often bounded in other ways by the diameter. The diameter for sufficiently distant nodes in e.g. connected graphs, is never more than a constant times the Euclidean distance, and for sufficiently dense graphs, is precisely the ceiling of the Euclidean distance \cite{diaz2016}. For more general limits, however, the diameter remains important, and so by providing a relation between graph distance and Euclidean distance, in a finite domain the broadcast time can be adequately bounded by our results.

Also, consider distance estimation. Hop counts between nodes in geometric networks can give a quick and energy efficient estimate of Euclidean distance. These estimates are made more accurate once this relation is well determined. This is often done numerically \cite{nguyen2015}. Given the graph distance, demonstrating how bounds on the Euclidean distance can work to lower power consumption in sensor network localisation is an important open problem. Using the multiplicity of paths can also assist inter-point distance estimates, though this research is ongoing, since one needs a path statistic with exceptionally low variance for this task to avoid errors. Either way, knowing the length in hops of geodesics given the Euclidean separation is essential.

Finally, we highlight packet delivery delay statistics and density estimation. Knowing the delay in packet transfer over a single hop is a classic task in ad hoc networks. With knowledge of the total number of hops, such as its expectation, one can provide the statistics of packet delivery delays given the relation of graph to Euclidean distance. This can help bound delivery delay in e.g. the growing field of delay tolerant networking, by adjusting network parameters accordingly.

\section{The expected number of $k$-hop paths}\label{sec:exp}
Consider a point process $\mathcal{X}$ on some space $\mathcal{V}$. If it is assumed that $x \in \mathcal{V}$ and $x \in \mathcal{X}$, what is true of the remaining points $\mathcal{X} \setminus \{x\}$? The Poisson point process has the property that when fixing a point, the remaining points are still a point process, and of the original intensity. This is \textit{Slivnyak's theorem}, and it characterises the Poisson process, see e.g. Proposition 5 in \cite{jagers1973}.  The relevance of the following lemma \cite{penrose2016,penrose20162} from stochastic geometry \cite{haenggi2009,elsawy2017} is now framed.
\begin{lemma}[Slivnyak-Mecke Formula]\label{l:mecke}
Let $t \in \mathbb{N}$. For any measurable real valued function $f$ defined on the product of $(\mathbb{R}^{d})^{t} \times \mathcal{G}$, where $\mathcal{G}$ is the space of all graphs on finite subsets of $[0,1]^{d}$, given a connection function $H$, the following relation holds
\begin{multline}\label{e:meckeformula}
\mathbb{E}\sum_{X_{1},\dots,X_{t} \in \mathcal{Y}}^{\neq}f\left(X_{1},\dots,X_{t},\mathcal{G}_{H}\left(\mathcal{Y}\setminus \{X_{1},\dots,X_{t}\}\right)\right) \\ = n^{t}\int_{\left[0,1\right]^{d}}\mathrm{d}x_{1}\dots\int_{\left[0,1\right]^{d}}\mathrm{d}x_{t}\mathbb{E}f\left(x_1,\dots,x_{t},\mathcal{G}_{H}\left(\mathcal{Y}\right)\right)
\end{multline}
where $\mathcal{Y}\subset[0,1]^{d}$, $\mathbb{E}\norm{\mathcal{Y}}=n$, and $\sum^{\neq}$ means the sum over all \textit{ordered} $t$-tuples of distinct points in $\mathcal{Y}$.
\end{lemma}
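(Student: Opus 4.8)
The plan is to recognise equation~\eqref{e:meckeformula} as the multivariate Mecke equation for the Poisson point process $\mathcal{Y}$ on $[0,1]^{d}$, specialised to a functional built from the random graph $\mathcal{G}_{H}$, and to establish it by induction on $t$ from the univariate Slivnyak--Mecke formula already invoked (Proposition~5 of \cite{jagers1973}, and \cite{penrose2016,penrose20162}). Throughout, $\mathcal{Y}$ is Poisson with intensity measure $\Lambda(\mathrm{d}x)=n\,\mathrm{d}x$, so that $\Lambda([0,1]^{d})=n=\mathbb{E}\norm{\mathcal{Y}}$ and, crucially, $\Lambda$ is diffuse. First I would reduce to non-negative $f$ by the decomposition $f=f^{+}-f^{-}$ together with a monotone-class argument, so that Tonelli's theorem licenses every interchange of expectation and integration below; the general integrable case then follows by subtraction.

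The base case $t=1$ is exactly the univariate formula: for any non-negative measurable $g$,
\begin{equation*}
\mathbb{E}\sum_{X \in \mathcal{Y}} g(X, \mathcal{Y}) = \int_{[0,1]^{d}} \mathbb{E}\, g\!\left(x, \mathcal{Y} \cup \{x\}\right)\,\Lambda(\mathrm{d}x).
\end{equation*}
Applying this with $g(x,\eta)=f\!\left(x,\mathcal{G}_{H}(\eta\setminus\{x\})\right)$, and using that $x\notin\mathcal{Y}$ almost surely (because $\Lambda$ is diffuse) so that $(\mathcal{Y}\cup\{x\})\setminus\{x\}=\mathcal{Y}$, collapses the right-hand graph argument to $\mathcal{G}_{H}(\mathcal{Y})$; inserting $\Lambda(\mathrm{d}x)=n\,\mathrm{d}x$ recovers equation~\eqref{e:meckeformula} for $t=1$.

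For the inductive step I would assume the claim for $t-1$ and isolate the last index of the ordered sum,
\begin{equation*}
\sum_{X_{1},\dots,X_{t}\in\mathcal{Y}}^{\neq} f(\cdots) = \sum_{X_{t}\in\mathcal{Y}} \ \sum_{X_{1},\dots,X_{t-1} \in \mathcal{Y}\setminus\{X_{t}\}}^{\neq} f\!\left(X_{1},\dots,X_{t}, \mathcal{G}_{H}(\mathcal{Y}\setminus\{X_{1},\dots,X_{t}\})\right),
\end{equation*}
then apply the univariate formula to the outer sum over $X_{t}$. The diffuseness identity once more collapses $(\mathcal{Y}\cup\{x_{t}\})\setminus\{x_{t}\}=\mathcal{Y}$ inside both the inner summation range and the graph argument, leaving a fixed point $x_{t}$ and an inner ordered $(t-1)$-sum over distinct points of $\mathcal{Y}$ with graph $\mathcal{G}_{H}(\mathcal{Y}\setminus\{X_{1},\dots,X_{t-1}\})$. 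Applying the inductive hypothesis to this inner object, with $x_{t}$ carried along as a spectator coordinate, produces the remaining $t-1$ integrals; reassembling the factors $\Lambda(\mathrm{d}x_{i})=n\,\mathrm{d}x_{i}$ yields the $n^{t}$ prefactor and the claimed $t$-fold integral.

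The step I expect to be the main obstacle is the bookkeeping in the inductive step: I must verify that the deterministic Mecke point $x_{t}$ inserted by the outer application never contaminates the right-hand graph $\mathcal{G}_{H}(\mathcal{Y})$, since that point is added to the configuration but is then removed again both from the summation range and from the vertex set, so the two operations have to be shown to cancel exactly on the almost-sure event $\{x_{t}\notin\mathcal{Y}\}$. A secondary technical point is confirming joint measurability of the map $(x_{1},\dots,x_{t},\omega)\mapsto f\!\left(x_{1},\dots,x_{t},\mathcal{G}_{H}(\mathcal{Y}(\omega)\setminus\{\cdots\})\right)$ into the graph space $\mathcal{G}$, which is needed before Tonelli can be applied; this is routine once $\mathcal{G}$ carries its natural Borel structure.
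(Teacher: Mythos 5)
Your proposal is correct, but it takes a genuinely different route from the paper. The paper proves Lemma \ref{l:mecke} directly, by conditioning on the total number of Poisson points: given $|\mathcal{Y}|=t$, the locations are i.i.d.\ uniform on $[0,1]^{d}$, so by exchangeability the ordered sum over distinct $m$-tuples contributes a descending-factorial factor $(t)_{m}$ times a single integral; pulling the $m$ outer integrals out of the Poisson series and reindexing $r=t-m$ re-sums the remaining series into $\mathbb{E}f\left(x_{1},\dots,x_{m},\mathcal{G}_{H}\left(\mathcal{Y}\right)\right)$, with the $n^{m}$ prefactor emerging from the Poisson weights (the paper writes $e^{n}n^{t}/t!$ where $e^{-n}n^{t}/t!$ is meant, a typo, and its displayed descending factorial $(n)_{k}$ should end at $n-k+1$). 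You instead induct on $t$ from the univariate Mecke identity, using diffuseness of the Lebesgue intensity to force the a.s.\ cancellation $\left(\mathcal{Y}\cup\{x_{t}\}\right)\setminus\{x_{t}\}=\mathcal{Y}$. Each approach buys something: the paper's computation is self-contained --- it never needs the univariate formula as a black box, handles all $t$ in one pass, and entirely sidesteps your ``contamination'' worry, since in the conditioning proof the fixed coordinates $x_{1},\dots,x_{m}$ are by construction disjoint from the residual configuration $\{x_{m+1},\dots,x_{t}\}$ --- whereas your induction has lighter combinatorial bookkeeping (no $(t)_{m}$ factor to track), generalizes verbatim to an arbitrary diffuse intensity measure rather than just $n\,\mathrm{d}x$ on $[0,1]^{d}$, and makes explicit the point-removal cancellation that the lemma's statement (graph on $\mathcal{Y}\setminus\{X_{1},\dots,X_{t}\}$ on the left, plain $\mathcal{Y}$ on the right) silently exploits. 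One gloss you share with the paper: $\mathcal{G}_{H}$ carries edge randomness independent of $\mathcal{Y}$, so strictly the Mecke equation should be applied after realizing edges as i.i.d.\ marks (or conditionally on the configuration, via Fubini); this is the routine fix to the measurability point you flag, and it is needed in both arguments.
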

\begin{remark}
To clarify, note that $\{a,b\}$ and $\{b,a\}$ are distinct \textit{ordered} 2-tuples, but indistinct \textit{unordered} 2-tuples.
\end{remark}
\begin{proof}
In the case $t=2$ with
\begin{equation}\label{e:indicator1}
f\left(u,v,\mathcal{G}_{H}\left(\mathcal{Y}\right)\right) =: \mathbf{1}\{u \leftrightarrow v\}
\end{equation}
a Bernoulli variate with parameter $H(\norm{u-v})$, then
  \begin{equation}\label{e:meckeintegrand1}
    \mathbb{E}f\left(u,v,\mathcal{G}_{H}\left(\mathcal{Y}\right)\right) = H\left(\norm{u-v}\right)
  \end{equation}
  where the expectation is over all graphs $\mathcal{G}_{H}\left(\mathcal{Y}\right)$. These indicator functions are important for dealing with the existence of edges between points of $\mathcal{Y}$. We note this for clarity, but it is not required for what follows. The proof of Lemma \ref{l:mecke} is obtained by conditioning on the number of points of $\mathcal{Y}$. Firstly,
\begin{multline}
\mathbb{E}\sum_{X_{1},\dots,X_{m} \in \mathcal{Y}}^{\neq}f\left(X_{1},\dots,X_{m},\mathcal{G}_{H}\left(\mathcal{Y}\setminus \{X_{1},\dots,X_{m}\}\right)\right) \nonumber \\ 
=\sum_{t=m}^{\infty}\left(\frac{e^{n}n^{t}}{t!}\right)\left(t\right)_{m}\int_{[0,1]^{d}}\mathrm{d}x_{1}\dots \\ \dots \int_{[0,1]^{d}}\mathrm{d}x_{t}f\left(x_1 \dots x_{m},\mathcal{G}_{H}\left(\{x_{m+1}, \dots , x_{t}\}\right)\right)
\end{multline}
where $(n)_{k}=n(n-1)\dots(n-k-1)$ is the descending factorial. Bring the $m$-dimensional integral over positions of vertices in the $m$-tuple outside the sum,
\begin{multline}
n^{m}\int_{[0,1]^{d}}\mathrm{d}x_{1} \dots \int_{[0,1]^{d}}\mathrm{d}x_{m} \sum_{t=m}^{\infty}\left(\frac{e^{n}n^{t-m}}{(t-m)!}\right) \\ \times \int_{[0,1]^{d}}\mathrm{d}y_{1}\dots \\ \dots\int_{[0,1]^{d}}\mathrm{d}y_{t-m}f\left(x_1 \dots x_{m},\mathcal{G}_{H}\left(\{y_{1}, \dots , y_{t-m}\}\right)\right), \nonumber
\end{multline}
and change variables such that $r=t-m$, such that
\begin{multline}
n^{m}\int_{[0,1]^{d}}\mathrm{d}x_{1} \dots \int_{[0,1]^{d}}\mathrm{d}x_{m} \sum_{r=0}^{\infty}\left(\frac{e^{n}n^{r}}{r!}\right)\int_{[0,1]^{d}}\mathrm{d}y_{1}\dots \\ \dots \int_{[0,1]^{d}}\mathrm{d}y_{r}f\left(x_1 \dots x_{m},\mathcal{G}_{H}\left(\{y_{1}, \dots , y_{r}\}\right)\right). \\ = n^{m}\int_{\left[0,1\right]^{d}}\mathrm{d}x_{1}\dots\int_{\left[0,1\right]^{d}}\mathrm{d}x_{m}\mathbb{E}f\left(x_1,\dots,x_{m},\mathcal{G}_{H}\left(\mathcal{Y}\right)\right) \nonumber
\end{multline}
as required.
\end{proof}
We know provide a general formula for the expected number of $k$-hop paths between $x,y \in V$.
\begin{proof}[Proof of Theorem \ref{p:khopexpectationgeneral}]
Define a new Poisson point process $\mathcal{Y}^{\star}$ conditioned on containing two specific points $x,y \in \mathbb{R}^{d}$ at Euclidean distance $\norm{x-y}$ and set $x=z_0,y=z_k$. In a similar manner to Eq. \ref{e:indicator1}, define the \textit{path-existence function} $g$ to be the following product
\begin{equation}\label{e:g}
g\left(z_1,\dots,z_{k-1},\mathcal{G}_{H}\left(\mathcal{Y}^{\star}\right)\right) = \prod_{i=0}^{k-1}\mathbf{1}\{z_{i} \leftrightarrow z_{i+1} \} 
\end{equation}
where the indicator is defined in Eq. \ref{e:indicator1}. The expected value of this function is then just the product of the connection probabilities $H$ of the inter-point distance along the sequence $z_0,\dots,z_{k}$, i.e.
\begin{equation}\label{e:pathfunction}
\mathbb{E}g(z_1,\dots,z_{k-1},\mathcal{G}_{H}\left(\mathcal{Y}^{\star}\right)) =  \prod_{i=0}^{k-1}H\left(\norm{z_{i}-z_{i+1}}\right)
\end{equation}
From the Mecke formula
\begin{multline}\label{e:khopmecke}
\mathbb{E}\sum_{X_1,\dots,X_{k-1} \in \mathcal{Y}^{\star}}^{\neq} g\left(X_1,\dots,X_{k-1},\mathcal{G}_{H}\left(\mathcal{Y}\setminus \{X_{1},\dots,X_{k-1}\}\right)\right) \\ = \rho^{k-1}\int_{\mathbb{R}^{dk-d}} \mathbb{E}g\left(z_1,\dots,z_{k-1}\right)\mathrm{d}z_1 \dots \mathrm{d}z_{k-1}
\end{multline}
and with Eq. \ref{e:pathfunction} replacing the integrand on the right hand side, the proposition follows.
\end{proof}
We now expand on the practically important situation where vertices connect with probability given by Eq. \ref{e:1}.

\begin{proof}[Proof of Proposition \ref{p:kexpectation}]
Using Eq. \ref{e:khopmecke} with $H$ taken from Eq. \ref{e:1}, and in the case $d,\eta=2$, we have
\begin{multline}\mathbb{E}\sum_{X_1,\dots,X_{k-1} \in \mathcal{Y}^{\star}}^{\neq} g\left(X_1,\dots,X_{k-1},\mathcal{G}_{H}\left(\mathcal{Y}^{\star} \setminus X_1,\dots,X_{k-1}\right)\right)\\=
\rho^{k-1}\int_{-\infty}^{\infty}\dots\int_{-\infty}^{\infty} \mathrm{d}z_{1_{x}}\mathrm{d}z_{1_{y}} \dots \mathrm{d}z_{\left(k-1\right)_{x}}\mathrm{d}z_{\left(k-1\right)_{y}} \\ \times \exp\left({-\beta\left(z_{1_{x}}^{2}+\dots+\left(\norm{x-y}-z_{\left(k-1\right)_{x}}^{2}\right)+z_{\left(k-1\right)_{y}}^{2}\right)}\right) \nonumber
\end{multline}
which, due to the addition of terms in the exponent, factors into a product of integrals, to be performed in sequence. Each is in $d=2$ variables.

For example, in the case of $k=2$ hops, we have
\begin{multline}
\mathbb{E}\sum_{X_1 \in \mathcal{Y}^{\star}} g\left(X_1,\mathcal{G}_{H}\left(\mathcal{Y}^{\star} \setminus X_1\right)\right) = \rho \int_{-\infty}^{\infty} \mathrm{d}z_{1_{x}} \int_{-\infty}^{\infty} \mathrm{d}z_{1_{y}} \dots \\  \times \exp{\left(-\beta \left( z_{1_{x}}^{2} +  z_{1_{y}}^{2} +  \left(||x-y|| - z_{1_{x}}\right)^{2} +  z_{1_{y}}^{2} \right) \right)} \nonumber
\end{multline}
which, by expanding the exponent becomes
\begin{eqnarray}
&& \rho \int_{-\infty}^{\infty}\exp{\left(-2\beta z_{1_{y}}^{2} \right)}\mathrm{d}z_{1_{y}} \int_{-\infty}^{\infty} \exp{\left(-\beta \left( z_{1_{x}}^{2} + \left(||x-y|| - z_{1_{x}}\right)^{2} \right) \right)}\mathrm{d}z_{1_{x}} \nonumber\\  &=& \frac{ \rho \pi}{8 \beta} \exp{\left(\frac{-\beta \norm{x-y}^2}{2} \right)}\left[\text{Erf}\left(\frac{\left(2z_{1_{x}}-\norm{x-y}\right)\sqrt{\beta}}{\sqrt{2}}\right)\right]^{\infty}_{-\infty} \left[\text{Erf}\left(\sqrt{2 \beta} z_{1_{y}}\right)\right]^{\infty}_{-\infty}\nonumber \\ &=&  \frac{\rho\pi}{2 \beta} \exp{\left(\frac{-\beta \norm{x-y}^2}{2} \right)} \nonumber
\end{eqnarray}
where the second integral on the right hand side of the second line can be performed by completing the square, and then substituting to obtain a Gaussian integral (which integrates to an error function). Due to the limits, the integrals evalaute in turn exactly for each $k$. Comparing these results, determined one by one, demonstrates the general form of Eq. \ref{e:khopexpectation}.
\end{proof}

\section{The variance for $k=3$}\label{sec:var}
In this section we consider the variance of the number of paths of three sequential edges.
\begin{proof}[Proof of Theorem \ref{p:threehopvariance}]
  A similar technique to the one implemented here is used to derive the asymptotic variance of the number of edges in the random geometric graph $G$ defined in Section \ref{sec:rcm}, see e.g. Section 2 of \cite{penrose20162}.

The proof now follows. Consider $\mathbb{E}\sigma_{3}^{2}\left(\norm{x-y}\right)$. This is the expected number of ordered \textit{pairs} of three hop paths between the fixed vertices $x$ and $y$. There are three non-overlapping contributions,
\begin{eqnarray}\label{e:threeterms}
\sigma_{3}^{2} = \Sigma_{0} + \Sigma_{1} + \Sigma_{2},
\end{eqnarray}
where for $i=0,1,2$ the integer $\Sigma_{i}$ denotes the number of ordered pairs of three hop paths with $i$ vertices in common. Taking $g$ from Eq. \ref{e:g}, we can quickly evaluate the term $\Sigma_{0}$, which is the following sum over ordered 4-tuples of points in $\mathcal{Y}^{\star}$,
\begin{eqnarray}
\Sigma_{0} = \sum_{V,W,X,Y \in \mathcal{Y}^{\star}}^{\neq}g\left(V,W\right)g\left(X,Y\right).
\end{eqnarray}
The Mecke formula implies that
\begin{eqnarray}\label{e:sigmazero}
  \mathbb{E}\Sigma_{0} = \rho^{4}\int_{\mathbb{R}^{8}}\mathbb{E}\left(g\left(z_1,z_2\right)g\left(z_3,z_4\right)\right)\mathrm{d}z_1\mathrm{d}z_2\mathrm{d}z_3\mathrm{d}z_4,
\end{eqnarray}
and since, according to Eq. \ref{e:khopmecke}, we have
\begin{eqnarray}
  \mathbb{E}\sigma_{3}=\rho^{2}\int_{\mathbb{R}^{4}}\mathbb{E}g\left(z_1,z_2\right)\mathrm{d}z_1\mathrm{d}z_2
\end{eqnarray}
then $\mathbb{E}\Sigma_{0}=\left(\mathbb{E}\sigma_{3}\right)^2$, which cancels with a term in the definition of the variance $\mathrm{Var}(\sigma_{3}) = \mathbb{E}(\sigma_{3}^2)-\left(\mathbb{E}(\sigma_{3})\right)^{2}$, such that we have the following simpler expression for the variance, based on Eq. \ref{e:threeterms} and Eq. \ref{e:sigmazero},
\begin{eqnarray}\label{e:vardecomposition}
\mathrm{Var}(\sigma_{3})= \mathbb{E}\Sigma_1 + \mathbb{E}\Sigma_2.
\end{eqnarray}
Now, $\mathrm{Var}(\sigma_{3})$ will follow from a careful evaluation of $\Sigma_1$ and $\Sigma_2$. The first of these, $\Sigma_{1}$, can be broken down into two separate contributions, denoted $\Sigma_{1(1)}$ and $\Sigma_{1(2)}$.

Dealing first with $\Sigma_{1(1)}$, notice the left panel of Fig. \ref{fig:paths1}, which shows an intersecting pair of paths in $\mathcal{G}_{H}\left(\mathcal{Y}^{\star}\right)$ which share a single vertex $U$ which is itself connected by an edge to $y$. Many triples of points in $\mathcal{Y}^{\star} \setminus \{x,y\}$ display this property.

We want a sum of indictor functions which counts the number of pairs of three-hop paths which intersect at a single vertex, in precisely the manner of the left panel of Fig. \ref{fig:paths1}. In the following double sum, the function $g(A,B)$ indicates that vertices $A$ and $B$ are on a three hop path $x \leftrightarrow A \leftrightarrow B \leftrightarrow y$. Look with care at the limits of the sum. They are set up to only count pairs of paths (2-tuples) which intersect this very specific way:
\begin{eqnarray}\label{e:s1}
\Sigma_{1(1)} = \sum_{U \in \mathcal{Y}^{\star}} \sum_{W,Z \in \mathcal{Y}^{\star} \setminus \{U\} }^{\neq}g\left(U,W\right)g\left(U,Z\right),
\end{eqnarray}
Via the Mecke formula, and with $U$ the position vector of the shared vertex, this can be written as an integral:
\begin{multline}\label{e:s1int}
  \rho\int_{\mathbb{R}^{d}}H\left(\norm{x-U}\right) \mathbb{E}\left[\left(\sum_{X \in \mathcal{Y}^{\star} \setminus \{U\} }\mathbf{1}\{U \leftrightarrow X\}\mathbf{1}\{X \leftrightarrow y\}\right)_{2}\right]\mathrm{d}U \\ +  \rho\int_{\mathbb{R}^{d}}H\left(\norm{y-U}\right) \mathbb{E}\left[\left(\sum_{X \in \mathcal{Y}^{\star} \setminus \{U\} }\mathbf{1}\{U \leftrightarrow X\}\mathbf{1}\{X \leftrightarrow x\}\right)_{2}\right]\mathrm{d}U
\end{multline}
with $(a)_{2}=a(a-1)$. This descending factorial counts distinct pairs of a set with cardinality $a$. If there are $9$ two hops paths, there are $9(8)=72$ pairs of paths which are not paths paired with themselves. Also, there are two terms in Eq. \ref{e:s1int} because we can exchange $x$ and $y$, and get another structure on a single triple of point which must be counted as part of $\Sigma_{1}$. For the first term, three hop paths diverge initially from each other, then unite at $U$ to hop in unison to $y$. The others hop first in union to $U$, then diverge to meet again finally at $y$.
\begin{figure*}[t]
  \noindent \begin{centering}
    \includegraphics[scale=0.35]{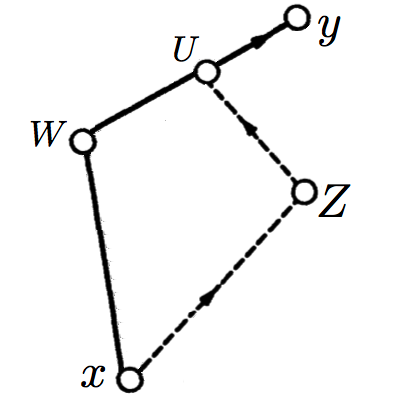} \hspace{3mm}
  \includegraphics[scale=0.35]{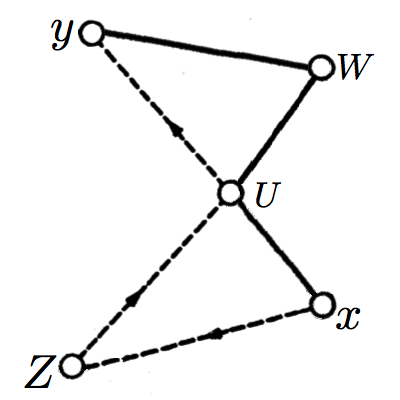} \hspace{3mm}
\includegraphics[scale=0.37]{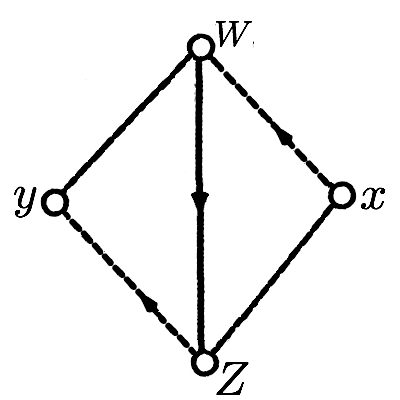}
\par\end{centering}
\caption{\textit{Left:} A pair of paths from $x$ to $y$ in the random connection model which intersect at exactly one vertex $U$, in a specific way indicated by the diagram, i.e. they share only the edge from $U$ to $y$. \textit{Center:} The same event, but occurring in such a way that they share no edges. These motif-like objects are used to calculate the variance of the number of three-hop paths, since one can represent the statistic as the number of ordered \textit{pairs} of paths running between two points. \textit{Right:} Two three-hop paths from $x$ to $y$ intersecting at both their vertices. They share the single edge from $W$ to $Z$.}
\label{fig:paths1}
\end{figure*}
The descending factorials in the two terms in Eq. \ref{e:s1int} are just $(\sigma_{2}\left(\norm{U-x}\right))_{2}$ and $(\sigma_{2}\left(\norm{U-y}\right))_{2}$ respectively, i.e. with $m$ taking both $x$ and $y$, and remembering that with $\Pi \sim \mathrm{Po}(\lambda)$ (i.e. distributed as a Poisson variate) then $\mathbb{E}(\Pi)_{2}=\mathbb{E}\left(\Pi^2\right)-\mathbb{E}\left(\Pi\right)=\lambda^{2}$, then 
\begin{eqnarray}\label{e:squareofexpectation}
\mathbb{E}\left[\left(\sum_{X \in \mathcal{Y}^{\star} \setminus \{U\} }\mathbf{1}\{U \leftrightarrow X\}\mathbf{1}\{X \leftrightarrow m\}\right)_{2}\right] = \left(\mathbb{E}\sigma_2\left(\norm{U-m}\right)\right)^{2}.
\end{eqnarray}
since the term inside the bracket on the left hand side is the number of two hop paths between $U$ and $m$, which is Poisson with expectation $\mathbb{E}\sigma_2\left(\norm{U-m}\right)$.

Eq. \ref{e:squareofexpectation} is simply the number of distinct pairs of two-hop paths from $U$ to $x$ (or $U$ to $y$), so we now have
\begin{multline}\label{e:s1gen}
  \Sigma_{1(1)}=\rho^{3}\int_{\mathbb{R}^{d}}H\left(\norm{x-U}\right) \left(\int_{\mathbb{R}^{d}}H\left(\norm{U-z}\right)H\left(\norm{z-y}\right)\mathrm{d}z\right)^{2}\mathrm{d}U \\ + \rho^{3}\int_{\mathbb{R}^{d}}H\left(\norm{y-U}\right) \left(\int_{\mathbb{R}^{d}}H\left(\norm{U-z}\right)H\left(\norm{z-x}\right)\mathrm{d}z\right)^{2}\mathrm{d}U
\end{multline}
and for the case of Rayleigh fading taking $d,\eta=2$, Eq. \ref{e:s1gen} evaluates to
\begin{eqnarray}\label{e:s1rayleigh}
 \Sigma_{1(1)} = \frac{\pi^3\rho^3}{4\beta^3}\exp{\left( \frac{-\beta\norm{x-y}^2}{2}\right)},
\end{eqnarray}
which appears as the second term in Eq. \ref{e:threevar}.

Now consider $\Sigma_{1(2)}$. This is designed to count pairs of paths which share a single vertex, but in a different way to $\Sigma_{1(1)}$ . This new sort of intersection structure is depicted in the middle panel of Fig. \ref{fig:paths1}. Consider with care, and in relation to this middle panel, the following sum over triples of points:
\begin{equation}\label{e:s12}
\sum_{U \in \mathcal{Y}^{\star}} \sum_{Z \in \mathcal{Y}^{\star} \setminus \{W\} }\mathbf{1}\{x \leftrightarrow Z\}\mathbf{1}\{Z \leftrightarrow U\}\mathbf{1}\{U \leftrightarrow y\}\sum_{W \in \mathcal{Y}^{\star} \setminus \{Z\}}\mathbf{1}\{x \leftrightarrow U\}\mathbf{1}\{U \leftrightarrow W\}\mathbf{1}\{W \leftrightarrow y\}. \nonumber
\end{equation}
This should count the contribution $\Sigma_{1(2)}$.
\begin{figure*}[t]
  \noindent \begin{centering}
     \includegraphics[scale=0.17]{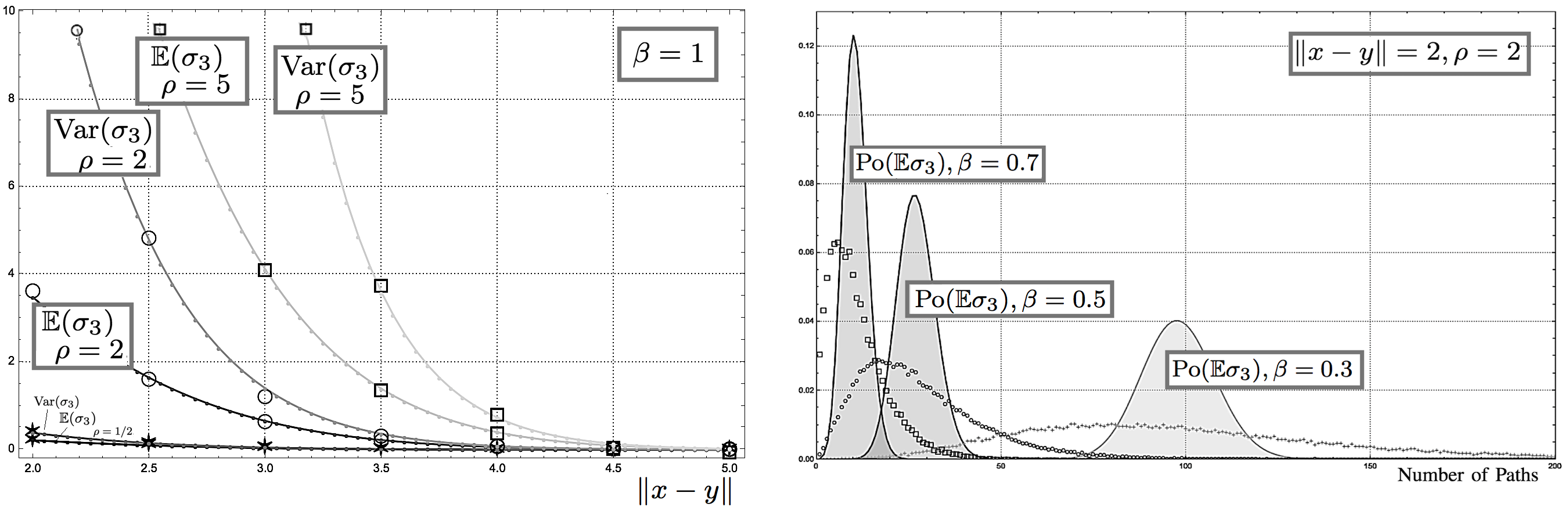}
\par\end{centering}
\caption{\textit{Left}: Numerical corroboration of the analytic mean and variance of $\sigma_{3}$, the number of three-hop paths joining two vertices $x,y$ in $\mathcal{G}_{H}$, for three separate densities $\rho=1/2,2$ and $5$, taking $\beta=1$, scaling over Euclidean separation (horizontal axis). The $\star$, $\circ$ and $\square$'s are Monte Carlo data, averaged over $10^4$ random graphs, whereas the smooth lines are our equations found in Theorems \ref{p:kexpectation} and \ref{p:threehopvariance}.  \textit{Right}: The distribution of the number of $k$-hop paths in the random connection model, fixing $\norm{x-y},\rho=2$, for three values of $\beta=0,7,0.5,0.3$ (left to right). The variance exceeds the mean to a greater extent as the typical connection range grows at fixed density. The Poisson distribution with the analytic mean of Eq. \ref{e:khopexpectation} is plotted for each value of $\beta$ for comparison.}
\label{fig:exp1}
\end{figure*}

In a similar manner to the evaluation of $\Sigma_{1(1)}$, the two inner sums are in fact just counting the number of two hop paths between $x$ and $U$, and also $U$ and $y$, then pairing them with each other, this time including self pairs so there is no descending factorial. The Mecke formula gives the expectation as the following integral:
\begin{multline}\label{e:s12int}
\mathbb{E}\Sigma_{1(1)}=\rho\int_{\mathbb{R}^{d}}H\left(\norm{x-U}\right)H\left(\norm{U-y}\right) \\ \times \mathbb{E}\left[\sum_{Z \in \mathcal{Y}^{\star} \setminus \{W\} }\mathbf{1}\{x \leftrightarrow Z\}\mathbf{1}\{Z \leftrightarrow U\}\sum_{W \in \mathcal{Y}^{\star} \setminus \{Z\}}\mathbf{1}\{U \leftrightarrow W\}\mathbf{1}\{W \leftrightarrow y\}\right]\mathrm{d}U \\ =  \rho\int_{\mathbb{R}^{d}}H\left(\norm{x-U}\right)H\left(\norm{U-y}\right) \\ \times \mathbb{E}\left[\sum_{Z \in \mathcal{Y}^{\star} \setminus \{W\} }\mathbf{1}\{x \leftrightarrow Z\}\mathbf{1}\{Z \leftrightarrow U\}\right]\mathbb{E}\left[\sum_{W \in \mathcal{Y}^{\star} \setminus \{Z\}}\mathbf{1}\{U \leftrightarrow W\}\mathbf{1}\{W \leftrightarrow y\}\right]\mathrm{d}U
\end{multline}
since the two sums are independent. The far right hand side of Eq. \ref{e:s12int} simplifies to
\begin{eqnarray}\label{e:s1int2} \rho\int_{\mathbb{R}^{d}}H\left(\norm{x-U}\right)H\left(\norm{U-y}\right) \mathbb{E}\left(\sigma_{2}\left(\norm{x-U}\right)\right)\mathbb{E}\left(\sigma_{2}\left(\norm{U-y}\right)\right)\mathrm{d}U.
\end{eqnarray}
$\Sigma_{1(2)}$ in terms of a general connection function is therefore 
\begin{multline}\label{e:s12gen}
  \mathbb{E}\Sigma_{1(2)}=\rho^{3}\int_{\mathbb{R}^{d}}H\left(\norm{x-U}\right) H\left(\norm{U-y}\right)\\ \times \left(\int_{\mathbb{R}^{d}}H\left(\norm{x-z}\right)H\left(\norm{z-U}\right)\mathrm{d}z\right) \\ \times \left(\int_{\mathbb{R}^{d}}H\left(\norm{U-z}\right)H\left(\norm{z-y}\right)\mathrm{d}z\right)\mathrm{d}U
\end{multline}
and for the case of Rayleigh fading taking $d,\eta=2$, the third term on the right hand side of Eq. \ref{e:threevar} is
\begin{eqnarray}\label{e:s12rayleigh}
 \mathbb{E}\Sigma_{1(2)} = \frac{1}{6}\exp{\left( \frac{-3\beta\norm{x-y}^2}{4}\right)}
\end{eqnarray}
by integrating the product of exponentials.

There are two more terms, $\Sigma_{2(1)}$ and $\Sigma_{2(2)}$. These both correspond to pairs of paths which share two vertices. Firstly, $\Sigma_{2(1)}$ refers to pairs of paths which share two vertices and all their edges, and so
\begin{eqnarray}\label{e:s21rayleigh}
  \mathbb{E}\Sigma_{2(1)}=\mathbb{E}\sigma_{3}
\end{eqnarray}
since there is a pair of paths for each path, specifically the self-pair. Secondly, $\Sigma_{2(2)}$ refers to pairs which share all their vertices, but not all their edges. This pairing is depicted in the right panel of Fig \ref{fig:paths1}. For this term, we use the following sum, in a similar manner to Eqs. \ref{e:s1int} and \ref{e:s12}
\begin{eqnarray}\label{e:s22}
\Sigma_{2(2)} = \sum_{Z,W \in \mathcal{Y}^{\star}}\mathbf{1}\{x \leftrightarrow Z\}\mathbf{1}\{Z \leftrightarrow W\}\mathbf{1}\{W \leftrightarrow y\}\mathbf{1}\{x \leftrightarrow W\}\mathbf{1}\{W \leftrightarrow Z\}\mathbf{1}\{Z \leftrightarrow y\}
\end{eqnarray}
Now, the shared edge indicator is $\mathbf{1}\{Z \leftrightarrow W\}$, which appears twice,
and so, in a similar manner to the other terms where $W$ and $Z$ are the $d$-dimensional position vectors of the nodes as well as there labels,
\begin{multline}\label{e:s22int}
  \mathbb{E}\Sigma_{2(2)} = \rho^{2}\int_{\mathbb{R}^{d}}H\left(\norm{x-Z}\right)H\left(\norm{Z-W}\right) \\ H\left(\norm{W-y}\right)H\left(\norm{x-W}\right)H\left(\norm{Z-y}\right)\mathrm{d}W\mathrm{d}Z.
\end{multline}
Only once all five links form do these pairs appear, so they are rare, and $\mathbb{E}\Sigma_{2(2)}$ is relatively small. Note that extensive counts of pairs of paths with this property can be an indication of proximity, a point we expand upon in Section \ref{sec:discussion}. Finally, therefore, the last term in Eq. \ref{e:threevar} is
\begin{eqnarray}\label{e:s22rayleigh}
 \Sigma_{2(2)} = \frac{\pi^2\rho^2}{8\beta^2}\exp{\left(-\beta\norm{x-y}^2\right)}
\end{eqnarray}
via evaluating Eq. \ref{e:s22int}, and via Eq. \ref{e:vardecomposition} and then Eqs. \ref{e:s1rayleigh}, \ref{e:s12rayleigh}, \ref{e:s21rayleigh} and \ref{e:s22rayleigh}, the theorem follows.
\end{proof}
We numerically corroborate these formulas via Monte Carlo simulations, the results of which are presented in the top-left panel of Fig. \ref{fig:exp1}.
\section{Discussion}\label{sec:discussion}

\begin{figure}
  \noindent \begin{centering}
     \includegraphics[scale=0.15]{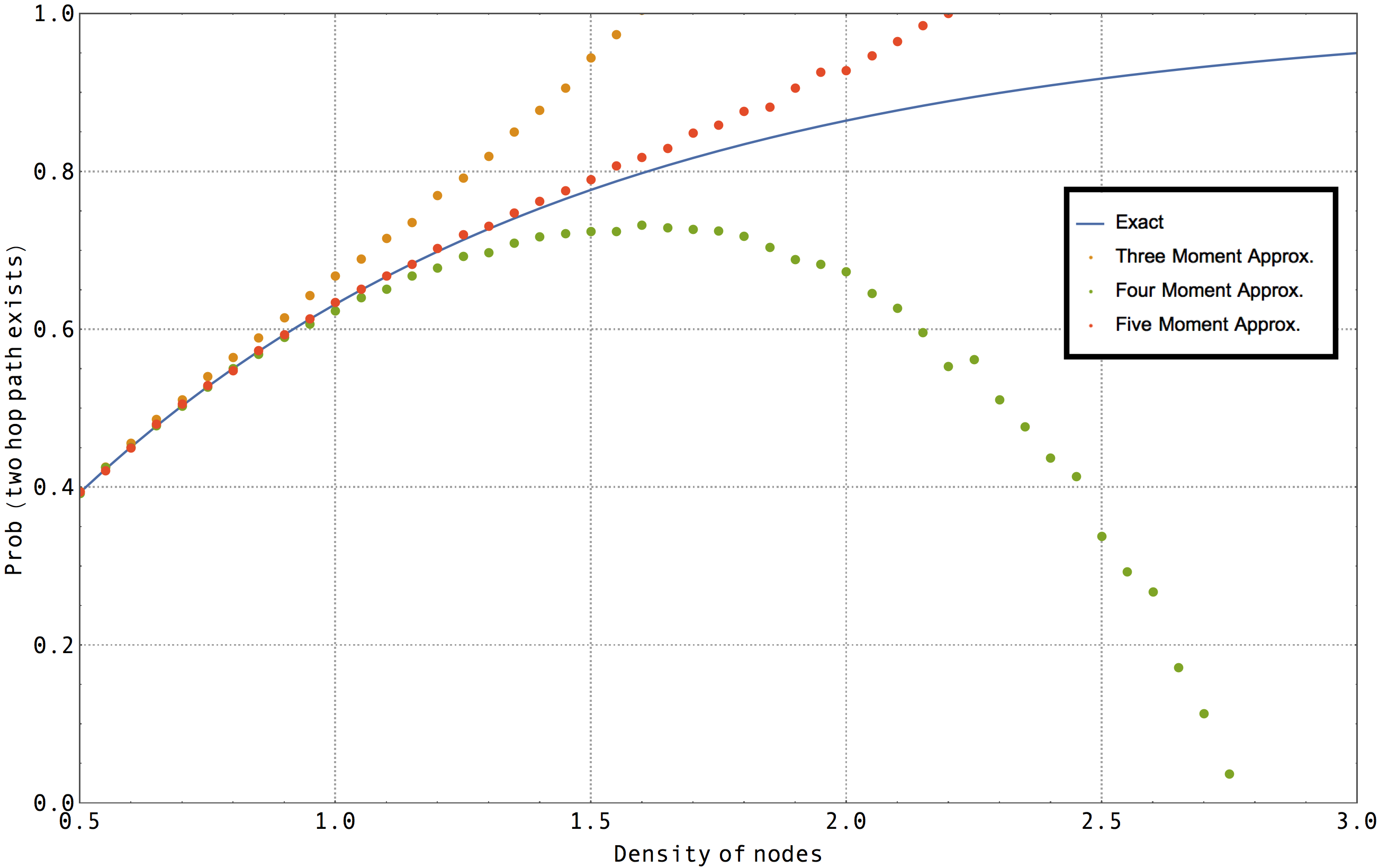} \\
     \includegraphics[scale=0.23]{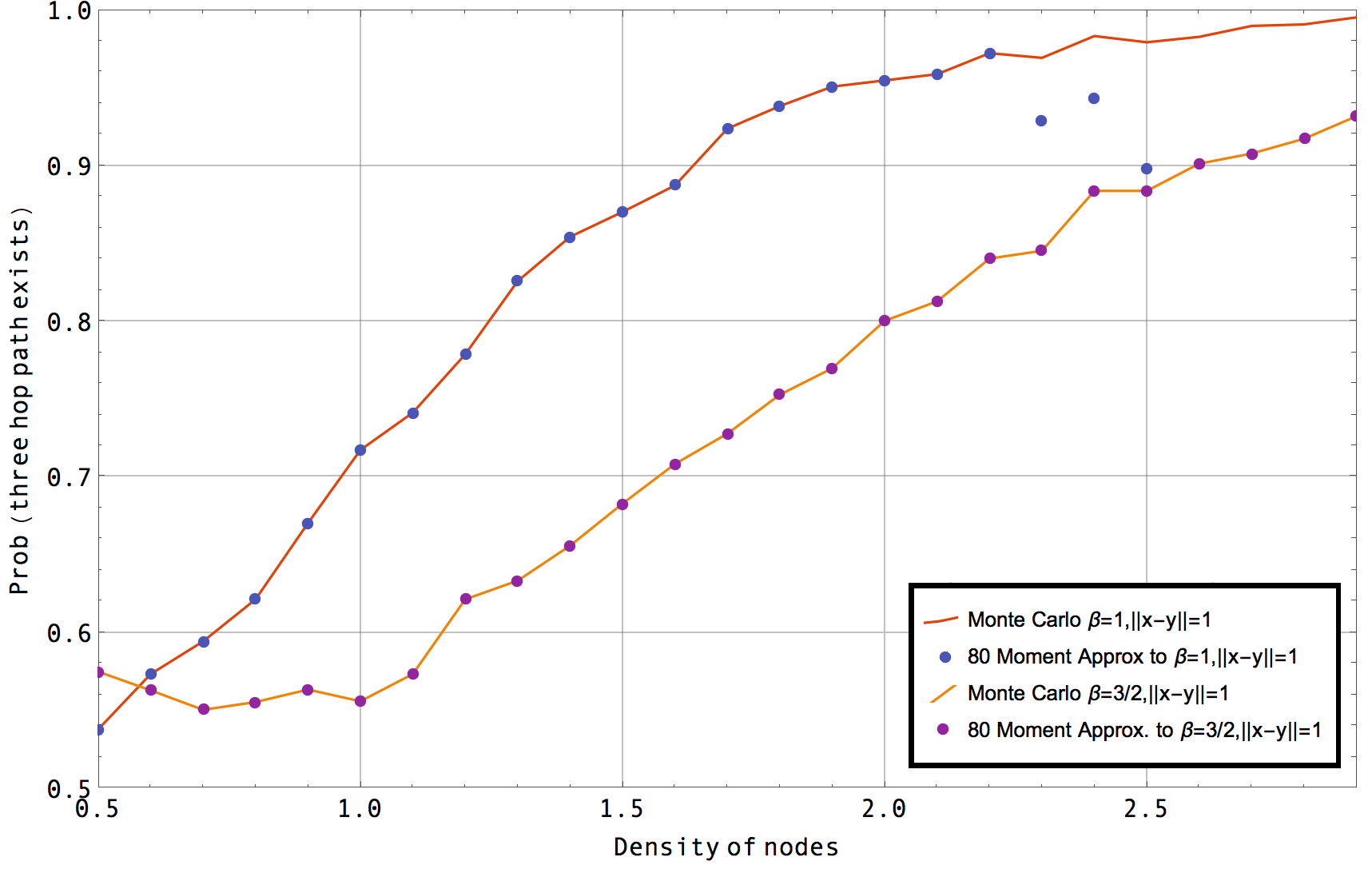}
\par\end{centering}
\caption{\textit{Top}: Both plots take $\beta,\norm{x-y}=1$, though the lower curve in the lower figure takes $\beta=3/2$. We approximate the numerically obtained path existence at various point process densities (solid curves) with sums of factorial moments of $\sigma_{k}$, also numerically obtained (dots), for order $3,4$ and $5$. \textit{Bottom}: Numerically obtained path existence for three hops. We approximate with the sum of 80 factorial moments in each case (dots). We have the analytical points for orders $0,1$ i.e. in terms up to the variance. When the paths are rare, the approximations work better with only a few moments available. These are, as far as we are aware, the most accurate approximations available in the literature.}
\label{fig:moments}
\end{figure}

The main point of discussion here is how to relate the moments of the path count $\sigma_{k}$ to the point probabilities, such as the probability exactly zero paths exist. This is in fact the classic moment problem of mathematical analysis initiated by Thomas J. Stieltjes in $1894$. From the theoretical point of view, the problem has been widely studied and solved many years ago. Various famous mathematicians have contributed, including P. Chebyshev, A. Markov, J. Shohat, M. Frechet, H. Hamburger, M. Riesz, F. Hausdorff and M. Krein \cite{gavriliadis2009}.

We use a relation between the factorial moments of the path count distribution and the point probabilities, as stated in the introduction
\begin{eqnarray}
 P(\sigma_{k}=t) = \frac{1}{t!}\sum_{i \geq 0} \frac{(-1)^{i}}{i!}\mathbb{E}\sigma_{k} (\sigma_{k}-1) \dots (\sigma_{k}-t-i+1)
\end{eqnarray}
we can deduce the probability of a $k$-hop path
\begin{eqnarray}
 P(\sigma_{k}>0) = 1 - \sum_{i \geq 0}\frac{(-1)^{i}}{i!}\mathbb{E}\left[\left(\sigma_{k}\right)_{i}\right]
\end{eqnarray}
where $\left(\sigma_{k}\right)_{i}$ is the descending factorial. The partial sums alternatively upper and lower bound this path-existence probability. We show this in Fig. \ref{fig:moments}, both for the case $k=2$ and the first non-trivial case $k=3$, since the $\sigma_{k}$ is no longer Poisson for all $k>2$. Adding more moments increases the order of the approximation to our desired probability $P(\sigma_{k}>0)$.

We are unable to deduce similar formulas for $k>3$ at this point, but the theory is precisely the same, and does not appear to present any immediate problems. The higher factorial moments, which require knowledge of $\mathbb{E}\sigma_{k}^{3}$ and beyond, are similarly accessible. What is not yet understood is how to form a recursion relation on the moments, which should reveal a ``general theory'' of their form. As such, it may be possible to give as much information as required about $P(\sigma_{k}>0)$ for all $k$. We leave the development of this to a later paper, but highlight that this would give the best possible bounds on the diameter of a random geometric graph in terms of a general connection problem. It remains a key open problem to describe analytically the $r$th factorial moment of $\sigma_{k}$.

\proof[Proof of Theorem \ref{t:moments}]
{For now, and as a key contribution of this article, we can at least present this idea as the proof of a lower bound on the path existence probability for the specific case of three hops, though it holds in general.}

We also have the scaling of the mean and variance of the path count distribution in terms of the node density $\rho$ and the path length $k$.  With $\beta,\norm{x-y}$ fixed, the expected number of paths is $\mathcal{O}(\rho^{k})$, while the variance appears to be $\mathcal{O}(\rho^{k+1})$. We have only verified this for $k=3$. We numerically obtain the probability mass of $\sigma_{3}$ in the top-right panel of Fig. \ref{fig:exp1} by generating $10^{5}$ random graphs and counting all three hop paths between two extra vertices added at fixed at distance $\norm{x-y}$ taking $\rho=2$ for $\beta=0.7,0.5$ and $0.3$. Plotted for comparison is the mass of a Poisson distribution with mean given by Eq. \ref{e:khopexpectation}, which is a spatially independent test case.

It is beyond the scope of this article to analyse the implications this new approximation method will have on e.g. low power localisation in wireless networks, broadcasting, or other areas of application, as discussed in the introduction. We defer this to a later study.

\section{Conclusion}\label{sec:conclusion}
In a random geometric graph known as the random connection model, we derived both the mean and variance of the number of $k$-hop paths between two nodes $x,y$ at displacement $\norm{x-y}$, on condition that $k\in[1,3]$. We also provided details of an example case whenever Rayleigh fading statistics are observed, which is important in applications. This shows how the variance of the number of paths is in fact composed of four terms, no matter what connection function is used. This provides an approximation to the probability that a $k$-hop path exists between distant vertices, and provides technique via summing factorial moments for formulating an accurate approximation in general, which is a sort of correction to a mean field model. This works toward addressing a recent problem of Mao and Anderson \cite{mao2010}. Are results can for example be applied in the industrially important field of connectivity based localisation, where internode distances are estimated without ranging with ulta-wideband sensors, as well as in mathematical problems related to bounding the broadcast time over unknown topologies. 

\section*{Acknowledgements}
This work is supported by Samsung Research Funding and Incubation Center of Samsung Electronics under Project Number SRFC-IT-1601-09. The first author also acknowledges support from the EPSRC Institutional Sponsorship Grant 2015 \textit{Random Walks on Random Geometric Networks}. Both authors wish to thank Mathew Penrose, Kostas Koufos, David Simmons, Leo Laughlin, Georgie Knight, Orestis Georgiou, Carl Dettmann, Justin Coon and Jon Keating for helpful discussions.


\end{document}